\newtheorem{prop}{Proposition}
\newtheorem{thm}{Theorem}
\newtheorem{cor}{Corollary}
\newtheorem{lemma}{Lemma}
\theoremstyle{definition}
\newtheorem*{remark}{Remark}
\newcommand\A{{\mathbb A}}
\newcommand\C{{\mathbb C}}
\newcommand\Q{{\mathbb Q}}
\newcommand\IH{{\mathbb H}}
\newcommand\cA{{\mathcal A}}
\newcommand\cC{{\mathcal C}}
\newcommand\Z{{\mathbb Z}}
\newcommand\cP{{\mathcal P}}
\newcommand\cQ{{\mathcal Q}}
\newcommand\al{\alpha}
\newcommand\la{\lambda}
\newcommand\s{{\sigma}}
\newcommand\ta{{\tau}}
\newcommand\ssm{\smallsetminus}
\newcommand\gequ{\geq}
\newcommand\lequ{\leq}
\newcommand\noin{\noindent}
\newcommand\bull{{\scriptscriptstyle \bullet}}
\newcommand\eqto{\stackrel{\lower1.5pt\hbox{$\scriptstyle\sim\,$}}\to}
\newcommand\ov{\overline}
\newcommand\wt{\widetilde}
\DeclareMathOperator{\Pf}{Pfaffian}
\DeclareMathOperator{\IG}{IG}
\DeclareMathOperator{\OG}{OG}
\DeclareMathOperator{\HH}{\mathrm{H}}
\DeclareMathOperator{\QH}{\mathrm{QH}}
\newcommand{\pic}[2]{\includegraphics[scale=#1]{#2}}
\begin{document}

\title[Quantum Giambelli formulas for isotropic Grassmannians]
{Quantum Giambelli formulas for isotropic Grassmannians}

\date{December 4, 2008}

\author{Anders Skovsted Buch}
\address{Department of Mathematics, Rutgers University, 110
  Frelinghuysen Road, Piscataway, NJ 08854, USA}
\email{asbuch@math.rutgers.edu}

\author{Andrew Kresch}
\address{Institut f\"ur Mathematik,
Universit\"at Z\"urich, Winterthurerstrasse 190,
CH-8057 Z\"urich, Switzerland}
\email{andrew.kresch@math.uzh.ch}

\author{Harry~Tamvakis} \address{University of Maryland, Department of
Mathematics, 1301 Mathematics Building, College Park, MD 20742, USA}
\email{harryt@math.umd.edu}

\subjclass[2000]{Primary 14N35; Secondary 05E15, 14M15, 14N15}

\thanks{The authors were supported in part by NSF Grant DMS-0603822
  (Buch), the Swiss National Science Foundation (Kresch), and NSF
  Grant DMS-0639033 (Tamvakis).}

\begin{abstract}
Let $X$ be a symplectic or odd orthogonal Grassmannian which
parametrizes isotropic subspaces in a vector space equipped with a
nondegenerate (skew) symmetric form. We prove quantum Giambelli
formulas which express an arbitrary Schubert class in the small
quantum cohomology ring of $X$ as a polynomial in certain special
Schubert classes, extending the cohomological Giambelli formulas of
\cite{BKT2}.
\end{abstract}

\maketitle 

\setcounter{section}{-1}

\section{Introduction}

Let $E$ be an even (respectively, odd) dimensional complex vector
space equipped with a nondegenerate skew-symmetric (respectively,
symmetric) bilinear form. Let $X$ denote the Grassmannian which
parametrizes the isotropic subspaces of $E$. The cohomology ring
$\HH^*(X,\Z)$ is generated by certain special Schubert classes, which
for us are (up to a factor of two) the Chern classes of the universal
quotient vector bundle over $X$. These special classes also generate
the small quantum cohomology ring $\QH(X)$, a $q$-deformation of
$\HH^*(X,\Z)$ whose structure constants are given by the three point,
genus zero Gromov-Witten invariants of $X$.  In \cite{BKT2}, we proved
a Giambelli formula in $\HH^*(X,\Z)$, that is, a formula expressing a
general Schubert class as an explicit polynomial in the special
classes.  Our goal in the present work is to extend this result to a
formula that holds in $\QH(X)$.

The quantum Giambelli formula for the usual type A Grassmannian was
obtained by Bertram \cite{Be}, and is in fact identical to the
classical Giambelli formula. In the case of maximal isotropic
Grassmannians, the corresponding questions were answered in
\cite{KT1,KT2}. The main conclusions here are similar to those of
loc.\ cit., provided that one uses the raising operator Giambelli
formulas of \cite{BKT2} as the classical starting point. For an odd
orthogonal Grassmannian, we prove that the quantum Giambelli formula
is the same as the classical one. The result is more interesting when
$X$ is the Grassmannian $\IG(n-k,2n)$ parametrizing
$(n-k)$-dimensional isotropic subspaces of a symplectic vector space
$E$ of dimension $2n$. Our theorem in this case states that the
quantum Giambelli formula for $\IG(n-k,2n)$ coincides with the
classical Giambelli formula for $\IG(n+1-k,2n+2)$, provided that the
special Schubert class $\s_{n+k+1}$ is replaced with $q/2$. In a
sequel to this paper, we will discuss the classical and quantum
Giambelli formulas for even orthogonal Grassmannians.

\section{Preliminary Results}
\label{prelims}

\subsection{}
Choose $k\geq 0$ and consider the Grassmannian $\IG=\IG(n-k,2n)$ of
isotropic $(n-k)$-dimensional subspaces of $\C^{2n}$, equipped with a
symplectic form. A partition $\la=(\la_1\geq\ldots\geq\la_\ell)$ is
{\em $k$-strict} if all of its parts greater than $k$ are distinct
integers. Following \cite{BKT1}, the Schubert classes on $\IG$ are
parametrized by the $k$-strict partitions whose diagrams fit in an
$(n-k)\times (n+k)$ rectangle; we denote the set of all such
partitions by $\cP(k,n)$. Given any partition $\la\in\cP(k,n)$ and a
complete flag of subspaces
\[
F_\bull: \, 0=F_0 \subsetneq F_1 \subsetneq \cdots
\subsetneq F_{2n}=\C^{2n}
\]
such that $F_{n+i}= F_{n-i}^\perp$ for
$0\leq i \leq n$, we have a Schubert variety
\[ X_\lambda(F_\bull) := \{ \Sigma \in \IG \mid \dim(\Sigma \cap
   F_{p_j(\lambda)}) \gequ j \ \ \forall\, 1 \lequ j \lequ
   \ell(\lambda) \} \,,
\]
where $\ell(\la)$ denotes the number of (non-zero) parts of $\la$ and 
\[
p_j(\lambda) := n+k+j-\lambda_j - \#\{i<j : \lambda_i+\lambda_j
> 2k+j-i \}.
\]
This variety has codimension $|\la|=\sum \la_i$ and defines, via
Poincar\'e duality, a Schubert class $\s_{\la}=[X_{\la}(F_\bull)]$ in
$\HH^{2|\la|}(\IG,\Z)$. The Schubert classes $\s_\la$ for $\la\in
\cP(k,n)$ form a free $\Z$-basis for the cohomology ring of $\IG$.
The {\em special Schubert classes} are defined by
$\s_r=[X_r(F_\bull)]=c_r(\cQ)$ for $1\lequ r \lequ n+k$, where $\cQ$
denotes the universal quotient bundle over $\IG$.

The classical Giambelli formula for $\IG$ is expressed using 
Young's {\em raising operators} \cite[p.\ 199]{Y}. We first agree that
$\s_0=1$ and $\s_r=0$ for $r<0$. For any integer sequence
$\alpha=(\alpha_1,\alpha_2,\ldots)$ with finite support and $i<j$, we
set $R_{ij}(\alpha) =
(\alpha_1,\ldots,\alpha_i+1,\ldots,\alpha_j-1, \ldots)$; a raising
operator $R$ is any monomial in these $R_{ij}$'s.  Define $m_\alpha =
\prod_i \s_{\alpha_i}$ and $R\,m_{\al} = m_{R\al}$ for any raising
operator $R$. For any $k$-strict partition $\la$, we consider the 
operator  
\[
R^{\la} =\prod (1-R_{ij})\prod_{\la_i+\la_j >
 2k+j-i}(1+R_{ij})^{-1}
\]
where the first product is
over all pairs $i<j$ and second product is over pairs $i<j$ such that
$\la_i+\la_j > 2k+j-i$.
The main result of \cite{BKT2} states that the {\em Giambelli formula}
\begin{equation}
\label{clasgiam}
\s_\la =R^{\la}\,m_{\la}
\end{equation}
holds in the cohomology ring of $\IG(n-k,2n)$.

\subsection{}
\label{pieriC}
As is customary, we will represent a partition by its Young diagram of
boxes; this is used to define the containment relation for partitions.
Given two diagrams $\mu$ and $\nu$ with $\mu\subset\nu$, the
skew diagram $\nu/\mu$ (i.e., the set-theoretic difference $\nu\ssm\mu$)
is called a horizontal (resp.\ vertical) strip if it does not contain 
two boxes in the same column (resp.\ row). 

We say that the box $[r,c]$ in row $r$ and column $c$ of a $k$-strict
partition $\lambda$ is {\em $k$-related\/} to the box $[r',c']$ if
$|c-k-1|+r = |c'-k-1|+r'$. For instance, the grey boxes in the
following partition are $k$-related.
\[ \pic{0.65}{shiftrel} \]
For any two $k$-strict partitions $\lambda$ and $\mu$, we write
$\lambda \to \mu$ if $\mu$ may be obtained by removing a vertical
strip from the first $k$ columns of $\lambda$ and adding a horizontal
strip to the result, so that

\medskip
\noin (1) if one of the first $k$ columns of $\mu$ has the same number
of boxes as the same column of $\lambda$, then the bottom box of this
column is $k$-related to at most one box of $\mu \smallsetminus
\lambda$; and

\medskip
\noin
(2) if a column of $\mu$ has fewer boxes than the same column of
$\lambda$, then the removed boxes and the bottom box of $\mu$ in this
column must each be $k$-related to exactly one box of $\mu
\smallsetminus \lambda$, and these boxes of $\mu \smallsetminus
\lambda$ must all lie in the same row.

\medskip

Let $\A$ denote the set of boxes of $\mu\ssm \la$ in columns $k+1$
through $k+n$ which are not mentioned in (1) or (2) above, and define
$N(\lambda,\mu)$ to be the number of connected components of $\A$
which do not have a box in column $k+1$.  Here two boxes are connected
if they share at least a vertex. In \cite[Theorem 1.1]{BKT1} we proved
that the Pieri rule
\begin{equation}
\label{classpieri}
\sigma_p \cdot \sigma_\lambda = \sum_{\substack{\lambda \to \mu \\
|\mu|=|\lambda|+p}} 2^{N(\lambda,\mu)} \, \sigma_\mu \,
\end{equation}
holds in $\HH^*(\IG,\Z)$, for any $p\in [1,n+k]$.

\subsection{} 
In the following sections we will work in the stable cohomology ring
$\IH(\IG_k)$, which is the inverse limit in the category of graded
rings of the system
\[
\cdots \leftarrow \HH^*(\IG(n-k,2n),\Z) \leftarrow
\HH^*(\IG(n+1-k,2n+2),\Z) \leftarrow \cdots
\]
The ring $\IH(\IG_k)$ has a free $\Z$-basis of Schubert classes
$\s_\la$, one for each $k$-strict partition $\la$, and may be 
presented as a quotient of the polynomial ring 
$\Z[\s_1,\s_2,\ldots]$ modulo the relations
\begin{equation}
\label{stabrels}
\s_r^2+ 2\sum_{i=1}^r(-1)^i\s_{r+i}\s_{r-i} = 0 \ \ \ \text{for $r>k$}.
\end{equation}
There is a natural surjective ring homomorphism
$\IH(\IG_k)\to\HH(\IG(n-k,2n),\Z)$ that maps $\s_\la$ to $\s_\la$,
when $\la\in\cP(k,n)$, and to zero, otherwise. The Giambelli 
formula (\ref{clasgiam}) and Pieri rule (\ref{classpieri}) are both 
valid in $\IH(\IG_k)$. We begin with some elementary consequences 
of these theorems.

For any $k$-strict partition $\la$ of length $\ell$, we define the 
sets of pairs
\[
\cA(\la) = \{(i,j)\ |\ \la_i+\la_j \leq 2k+j-i
\ \, \text{and} \ \, 1\leq i< j \leq \ell\}
\]
\[
\cC(\la) = \{(i,j)\ |\ \la_i+\la_j > 2k+j-i
\ \, \text{and} \ \, 1\leq i< j \leq \ell\}
\]
and two integer vectors $a=(a_1,\ldots,a_{\ell})$ and 
$c= (c_1,\ldots,c_{\ell})$ by setting
\[
a_i = \#\{j\ |\ (i,j)\in\cA(\la)\}, \  \
c_i = \#\{j\ |\ (i,j)\in\cC(\la)\}
\]
for each $i$.

\begin{prop}
\label{ac}
We have $\la_i-c_i\geq \la_j-c_j$ for each $i<j\leq\ell$.
\end{prop}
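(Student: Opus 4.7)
The plan is to reduce the claim, by transitivity, to the case of consecutive indices: it suffices to show $\lambda_i - c_i \geq \lambda_{i+1} - c_{i+1}$ for every $1 \leq i < \ell$. Setting $d := \lambda_i - \lambda_{i+1} \geq 0$, this rephrases as $c_i - c_{i+1} \leq d$.

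Next I would split the count $c_i$ into the $j = i+1$ contribution, an indicator $\chi \in \{0,1\}$ recording whether $(i, i+1) \in \cC(\lambda)$, and the $j \geq i+2$ contributions, and compare the latter term-by-term with $c_{i+1}$. Introducing $f(j) := \lambda_{i+1} + \lambda_j - (2k + j - i - 1)$ for $j \geq i+2$, the condition $(i,j) \in \cC(\lambda)$ translates to $f(j) \geq 2 - d$, while $(i+1, j) \in \cC(\lambda)$ becomes $f(j) \geq 1$. The crucial observation is that $f$ is strictly decreasing on $\{i+2, \ldots, \ell\}$: since $\lambda$ is a partition, $f(j+1) - f(j) = \lambda_{j+1} - \lambda_j - 1 \leq -1$, so $f$ attains each integer value at most once on this range.

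For $d \geq 1$, the difference between the $j \geq i+2$ contributions to $c_i$ and $c_{i+1}$ equals $\#\{j \geq i+2 : 2 - d \leq f(j) \leq 0\}$, which is at most $d-1$ by the strict monotonicity of $f$; hence $c_i - c_{i+1} \leq \chi + (d-1) \leq d$. For $d = 0$, the $k$-strict hypothesis on $\lambda$ forces $\lambda_i = \lambda_{i+1} \leq k$ (parts exceeding $k$ must be distinct), so $\lambda_i + \lambda_{i+1} \leq 2k$ and $\chi = 0$; moreover $f(j) \geq 2$ implies $f(j) \geq 1$, so the $j \geq i+2$ contribution to $c_i - c_{i+1}$ is non-positive. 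In both cases the desired bound holds.

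I expect the main subtlety to lie in the $d = 0$ case, where one genuinely needs the $k$-strict property: without it, one could have $\lambda_i = \lambda_{i+1} > k$, making $\chi = 1$ and breaking the bound. The rest is a transparent counting argument driven by the strict monotonicity of $f$, which is really just the observation that in a partition, shifting from row $i+1$ to row $j$ loses at least one step per row relative to the threshold $2k + j - i - 1$.
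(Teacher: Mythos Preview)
Your argument is correct, but it takes a genuinely different route from the paper's. You reduce to consecutive indices by transitivity and then control $c_i-c_{i+1}$ via the strictly decreasing integer-valued function $f(j)=\lambda_{i+1}+\lambda_j-(2k+j-i-1)$, giving a clean pigeonhole bound with only the dichotomy $d\geq 1$ versus $d=0$ (the latter being exactly where $k$-strictness enters). The paper instead treats arbitrary $i<j$ directly, using the observation that for each fixed $i$ the set $\{r:(i,r)\in\cC(\lambda)\}$ is an interval $\{i+1,\ldots,s\}$, so that $c_i=s-i$ and $c_j=t-j$ for the maximal such indices $s,t$; it then compares $s$ and $t$ through several case distinctions (whether $t$ exists, whether $\lambda_s\geq\lambda_j$, etc.). Your approach is shorter and hides the case analysis inside the monotonicity of $f$; the paper's approach makes the interval structure of $\cC(\lambda)$ explicit, which is conceptually useful elsewhere but costs more bookkeeping here.
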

\begin{proof}
Observe that the desired inequality is equivalent to
\begin{equation}
\label{diff}
\la_i-\la_j \geq
\#\{r\leq \ell\ |\ (i,r)\in \cC(\la)\} - 
\#\{r\leq \ell\ |\ (j,r)\in \cC(\la)\}.
\end{equation}
Let $j=i+r$ and let $s$ (respectively $t$) be maximal such that
$(i,s)\in\cC(\la)$ (respectively, $(j,t)\in\cC(\la)$). Assume first
that $t$ exists, hence $s$ exists and $s\gequ t$. The inequality
(\ref{diff}) then becomes $\la_i-\la_{i+r} \gequ s-t+r$. We have
\[
\la_i+\la_s\geq 2k+1+s-i \ \text{ and } \
\la_{i+r}+\la_{t+1}\leq 2k+t+1-i-r,
\]
hence
\[
\la_i-\la_{i+r}\geq 
s-t+r+(\la_{t+1}-\la_s).
\]
If $t<s$, then $\la_{t+1}\geq \la_s$ and we are done. If $t=s$, we
need to show that $\la_i-\la_{i+r} \geq r$. This is true because
$(j,j+1)\in\cC(\la)$ and $\la$ is $k$-strict, 
hence $\la_i>\la_{i+1}>\cdots > \la_{i+r}$.

Next we assume that $t$ does not exist, so that either $j=\ell$ or
the pair $(j,j+1)$ lies in $\cA(\la)$ and
\begin{equation}
\label{ina}
\la_j+\la_{j+1}\lequ 2k+1.
\end{equation}
If $s$ does not exist, there is nothing to prove.
We must show that $\la_i-\la_j\gequ s-i$,
knowing that $(i,s)\in\cC(\la)$, that is,
\begin{equation}
\label{inC}
\la_i+\la_s\gequ 2k+1+s-i.
\end{equation}
Assume first that $\la_s\gequ \la_j$. If $\la_s>k$ then we have
\[
\la_i > \la_{i+1} > \cdots > \la_s
\]
and hence $\la_i-\la_j\gequ \la_i-\la_s \gequ s-i$.
Otherwise $\la_s\lequ k$ and
(\ref{inC}) gives
\[
\la_i-\la_j\gequ \la_i-\la_s \gequ \la_i -k \gequ s-i+1+(k-\la_s)\gequ s-i.
\]
Finally, suppose that $\la_s<\la_j$, so in particular $j+1\lequ s$. Then
(\ref{ina}) and (\ref{inC}) give
\begin{gather*}
\la_i-\la_j \gequ \la_i+(\la_{j+1}-2k-1)
\gequ (2k+1+s-i-\la_s)+\la_{j+1}-2k-1 \\
=(\la_{j+1}-\la_s)+(s-i)\gequ s-i. \qedhere
\end{gather*} 
\end{proof}
\noindent
Proposition \ref{ac} implies that for any $\lambda$, the composition
$\lambda-c$ is a partition, while $\lambda+a$ is a strict partition.

\begin{prop}
\label{giamgen}
For any $k$-strict partition $\la$, the Giambelli polynomial 
$R^\la \,m_\la$ for $\s_\la$ involves only generators 
$\s_p$ with $p\leq \la_1+a_1+\la_2+a_2$. 
\end{prop}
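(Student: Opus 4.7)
The plan is to factor the $\cA$-operators out of $R^\la$ and reduce the problem to a statement about Schur-$Q$-type polynomials, for which Schur's Pfaffian identity provides a clean bound. Set $P := \prod_{i<j}(1-R_{ij})(1+R_{ij})^{-1}$. The identity
\[
R^\la \;=\; \prod_{(i,j)\in\cA(\la)}(1+R_{ij})\,\cdot\,P
\]
follows because the extra $(1+R_{ij})$ factors convert the $\cA$-contributions to $P$ from $\tfrac{1-R_{ij}}{1+R_{ij}}$ into $(1-R_{ij})$, reproducing $\prod_\cA(1-R_{ij})\prod_\cC\tfrac{1-R_{ij}}{1+R_{ij}}$. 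Expanding $\prod_\cA(1+R_{ij}) = \sum_{S\subseteq\cA(\la)}R_S$ with $R_S := \prod_{(i,j)\in S}R_{ij}$, and using that all raising operators commute, yields the key decomposition
\[
R^\la m_\la \;=\; \sum_{S\subseteq\cA(\la)} P\,m_{\la+v_S}, \qquad v_S := \sum_{(i,j)\in S}(e_i-e_j).
\]

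Next I would bound $P\,m_\mu$ for an arbitrary integer sequence $\mu$. By Schur's Pfaffian identity (read as a formal identity in the raising-operator algebra), $P\,m_\mu$ equals the Pfaffian of the matrix with entries $P\,m_{(\mu_i,\mu_j)}$, augmented by a row of $\s_{\mu_i}$'s when the length of $\mu$ is odd. Each two-part entry
\[
P\,m_{(a,b)} \;=\; \s_a\s_b + 2\sum_{t\gequ 1}(-1)^t\s_{a+t}\s_{b-t}
\]
involves only generators $\s_p$ with $p\lequ a+b$, and the Pfaffian expands as an alternating sum over perfect matchings of products of such entries. Since the maximum value of $\mu_i+\mu_j$ attained by a pair in any matching is bounded by $\mu_{(1)}+\mu_{(2)}$, the sum of the two largest entries of $\mu$, the polynomial $P\,m_\mu$ involves only $\s_p$ with $p\lequ \mu_{(1)}+\mu_{(2)}$.

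Finally, since every $(i,j)\in S\subseteq\cA(\la)$ with $j>i$ adds $1$ to row $i$ and there are at most $a_i$ such pairs, one has $(\la+v_S)_i\lequ\la_i+a_i$ for each $i$. By Proposition \ref{ac}, $\la+a$ is a strict partition, so $\la_1+a_1$ and $\la_2+a_2$ are its two largest parts; the elementary monotonicity inequality $\max_{i\ne j}(y_i+y_j)\lequ\max_{i\ne j}(b_i+b_j)$ when $y_i\lequ b_i$ then shows that the top two entries of $\la+v_S$ sum to at most $(\la_1+a_1)+(\la_2+a_2)$. Combining with the preceding paragraph, each summand $P\,m_{\la+v_S}$---and hence $R^\la m_\la$---involves only $\s_p$ with $p\lequ \la_1+a_1+\la_2+a_2$. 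The main obstacle is justifying Schur's Pfaffian identity for the operator $P$ applied to arbitrary integer sequences $\mu$ rather than just strict partitions, but since this is a formal identity of raising operators it holds in full generality.
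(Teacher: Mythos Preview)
Your proof is correct and follows essentially the same approach as the paper's own argument: factor $R^\la$ as $\prod_{\cA(\la)}(1+R_{ij})$ times the full product $P=\prod_{i<j}\frac{1-R_{ij}}{1+R_{ij}}$, expand over subsets $S\subseteq\cA(\la)$, invoke Schur's Pfaffian identity for $P$, bound each two-row entry by the sum of its parts, and then use Proposition~\ref{ac} to conclude that $\la+a$ is strict so that every pair sum $\nu_i+\nu_j$ is dominated by $(\la_1+a_1)+(\la_2+a_2)$. The only cosmetic differences are that the paper pads to an even length $2m\ge\ell$ rather than speaking of an extra row in the odd case, and it phrases the final inequality directly as $\nu_i\le\la_i+a_i$ without naming the monotonicity step.
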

\begin{proof}
We have
\[
R^\la\,m_\la = \prod_{1\leq i<j\leq\ell}\frac{1-R_{ij}}{1+R_{ij}}\,
\prod_{(i,j)\in\cA(\la)}(1+R_{ij})\, m_\la = 
\sum_{\nu\in N} \prod_{1\leq i<j\leq\ell}\frac{1-R_{ij}}{1+R_{ij}}
\, m_\nu
\]
where $N$ is the multiset of integer vectors defined by
\[
N = \left\{\prod_{(i,j)\in S}R_{ij}\,\la \ \
|\ \ S\subset \cA(\la)\right\}.
\]
If $m>0$ is the least integer such that $2m \geq \ell$, then 
we have 
\begin{equation}
\label{pfaff}
\prod_{1\leq i<j\leq m}\frac{1-R_{ij}}{1+R_{ij}} = 
\Pf\left(\frac{1-R_{ij}}{1+R_{ij}}\right)_{1\leq i,j \leq 2m}\,.
\end{equation}
Equation (\ref{pfaff}) follows from Schur's classical identity
\cite[Sec.\ IX]{S2}
\[
\prod_{1\leq i<j\leq 2m}\frac{x_i-x_j}{x_i+x_j} =
\Pf\left(\frac{x_i-x_j}{x_i+x_j}\right)_{1\leq i,j \leq 2m}.
\]
Note that each single entry in the Pfaffian (\ref{pfaff}) expands 
according to the formula
\[
\frac{1-R_{12}}{1+R_{12}}\, m_{c,d} = 
\s_c\,\s_d - 2\,\s_{c+1}\,\s_{d-1} + 2\, \s_{c+2}\,\s_{d-2} - \cdots
+(-1)^d\, 2\, \s_{c+d}.
\]
By Proposition \ref{ac}, we know that $\la+a =
(\la_1+a_1,\la_2+a_2,\ldots)$ is a strict partition, hence
$\la_i+a_i+\la_j+a_j \leq \la_1+a_1+\la_2+a_2$ for any distinct $i$
and $j$.  Since we furthermore have $\nu_i\leq \la_i+a_i$, for any
$\nu\in N$, the result follows.
\end{proof}

\begin{cor}
\label{onlycor}
For any $\la\in \cP(k,n)$ the stable Giambelli polynomial for $\s_\la$
involves only special classes $\s_p$ with $p\leq 2n+2k-1$.
\end{cor}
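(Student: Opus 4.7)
The plan is to invoke Proposition \ref{giamgen}, which reduces the corollary to showing that $\la_1+a_1+\la_2+a_2\leq 2n+2k-1$ for every $\la\in\cP(k,n)$. The data I can use is that $\la_1\leq n+k$, $\ell:=\ell(\la)\leq n-k$, and $\la$ is $k$-strict.

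The key observation is a monotonicity property of $\cA(\la)$ in $j$. For fixed $i$, the defining inequality $\la_i+\la_j\leq 2k+j-i$ becomes easier as $j$ increases, since $\la_j$ is weakly decreasing while $2k+j-i$ is strictly increasing. Hence the set of $j\in\{i+1,\ldots,\ell\}$ contributing to $a_i$ is a final interval of the form $\{\ell-a_i+1,\ldots,\ell\}$. Assuming $a_i\geq 1$ and setting $j_0:=\ell-a_i+1$, the inequality at $j_0$ combined with $\la_{j_0}\geq 1$ (valid because $j_0\leq\ell$) rearranges to
\[
\la_i+a_i\leq 2k+\ell-i.
\]

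I would then apply this for $i=1$ and $i=2$, splitting on whether $a_i$ vanishes. For $i=1$: if $a_1\geq 1$, the displayed bound yields $\la_1+a_1\leq 2k+\ell-1\leq n+k-1$; if $a_1=0$, the trivial bound $\la_1\leq n+k$ suffices; either way $\la_1+a_1\leq n+k$. For $i=2$: if $a_2\geq 1$, similarly $\la_2+a_2\leq n+k-2$; if $a_2=0$, I would rule out $\la_2=n+k$ using $k$-strictness (otherwise $\la_1=\la_2=n+k>k$ would violate distinctness of parts exceeding $k$), giving $\la_2\leq n+k-1$. Summing produces $\la_1+a_1+\la_2+a_2\leq(n+k)+(n+k-1)=2n+2k-1$, and Proposition \ref{giamgen} completes the argument. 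The main thing to get right is the $k$-strictness argument in the $a_2=0$ edge case; otherwise the proof is a routine combination of the rectangle constraint with the monotonicity observation above.
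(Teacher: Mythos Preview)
Your proposal is correct and supplies exactly the elementary bound $\la_1+a_1+\la_2+a_2\leq 2n+2k-1$ needed to deduce the corollary from Proposition~\ref{giamgen}; the paper states the corollary without proof, so this is the intended argument. A slightly shorter route for the $i=2$ case is to use the paper's remark (after Proposition~\ref{ac}) that $\la+a$ is a strict partition, giving $\la_2+a_2\leq\la_1+a_1-1\leq n+k-1$ directly, but your $k$-strictness case analysis is equally valid.
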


\begin{lemma}
\label{stablepieri}
Let $\la$ and $\nu$ be $k$-strict partitions such that $\nu_1 >
\max(\la_1,\ell(\la)+2k)$ and $p\geq 0$. Then the coefficient of
$\s_\nu$ in the Pieri product $\s_p \cdot \s_\la$ is equal to the coefficient
of $\s_{(\nu_1+1,\nu_2,\nu_3,\ldots)}$ in the product $\s_{p+1} \cdot \s_\la$.
\end{lemma}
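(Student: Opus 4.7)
The plan is to apply the Pieri rule (\ref{classpieri}) to both products and exhibit a coefficient-preserving bijection between the $k$-strict partitions $\mu$ contributing to $\s_p\cdot\s_\la$ with $\mu=\nu$, and those $\mu'$ contributing to $\s_{p+1}\cdot\s_\la$ with $\mu'=(\nu_1+1,\nu_2,\nu_3,\ldots)$. The natural candidate is the map $\phi\colon\mu\mapsto\mu\cup\{[1,\nu_1+1]\}$, whose inverse deletes the box $[1,\nu_1+1]$. The task reduces to showing that $\phi$ sends valid Pieri targets to valid Pieri targets and preserves the multiplicity $2^{N(\la,\cdot)}$.

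First I would exploit the hypothesis $\nu_1>\la_1$. Writing $\la'$ for $\la$ after its vertical strip is removed, the horizontal-strip inequality $\mu^T_c\leq\la'^T_c+1\leq\la^T_c+1$ forces $\mu^T_c=1$ for $\la_1<c\leq\nu_1$ and, in particular, $\ell(\mu)\leq\ell(\la)+1$. Consequently, every bottom box of a first $k$ column of $\mu$ (and of $\mu'$) lies in a row of index $\leq\ell(\la)+1$, and every box removed from $\la$ by the vertical strip lies in a row of index $\leq\ell(\la)$.

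The crucial step is the $k$-relation computation. A box $[r',c']$ with $c'\leq k$ is $k$-related to $[1,\nu_1+1]$ precisely when $r'=\nu_1-2k+c'$, and the hypothesis $\nu_1>\ell(\la)+2k$ then forces $r'>\ell(\la)+1$---a row index unreachable by any bottom box of a first $k$ column of $\mu'$ or any removed box. The identical inequality handles $[1,\nu_1]$ as well. Thus neither of these two boxes is ``mentioned'' by (1) or (2), so both belong to $\A$.

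From here the verification that $\la\to\mu'$ is a valid Pieri move is immediate: the vertical strip removed from $\la$, the bottom boxes of the first $k$ columns of $\mu'$, and the set of boxes in $\mu'\ssm\la$ mentioned by (1) or (2) all coincide with the corresponding data for $\mu$. Finally, since $\mu'^T_c=1$ for $\la_1<c\leq\nu_1+1$, the added box $[1,\nu_1+1]$ is connected inside $\A$ only to $[1,\nu_1]$; passing from $\mu$ to $\mu'$ therefore enlarges the component of $[1,\nu_1]$ by a single box without altering whether that component meets column $k+1$. Hence $N(\la,\mu')=N(\la,\mu)$, and $\phi$ is the desired coefficient-preserving bijection. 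I expect the only real obstacle to be the $k$-relation inequality in the third step; conditions (1), (2) and the computation of $N$ transfer formally once that bound is in hand.
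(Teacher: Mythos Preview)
Your approach is essentially the paper's own: both arguments show that the row-$1$ boxes of $\nu\ssm\la$ past column $\max(\la_1,\ell(\la)+2k)$ lie in $\A$ and form a single connected strip, so that extending the strip by the single box $[1,\nu_1+1]$ preserves both the validity of $\la\to\mu$ and the count $N(\la,\cdot)$. One small slip: for $[1,\nu_1]$ the $k$-relation computation gives only $r'=\nu_1-2k-1+c'\geq\ell(\la)+1$, not $r'>\ell(\la)+1$; the patch is to observe that a first-$k$ column of $\mu$ whose bottom box sits in row $\ell(\la)+1$ must have been \emph{lengthened} relative to $\la$, so neither condition (1) nor (2) refers to it, and the relevant bottom/removed boxes all lie in rows $\leq\ell(\la)$.
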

\begin{proof}
Let $c=\max(\la_1,\ell(\la)+2k)+1$. Observe that box $[1,c]$ belongs
to a connected component of the subset $\A$ of $\nu\ssm\la$ defined in
\S \ref{pieriC} which extends all the way to the rightmost box
of $\nu$. The same statement is true for $(\nu_1+1,\nu_2,
\nu_3,\ldots) \ssm \la$, except that the component goes one box
further to the right. The number of components of $\A$ which do not
meet column $k+1$ in both cases is the same, hence the two Pieri
coefficients are equal.
\end{proof}

Given any partition $\la$, we let $\la^*=(\la_2,\la_3,\ldots)$.

\begin{prop}
\label{recprop}
For any $\la\in \cP(k,n)$, there exists a recursion formula of the form
\begin{equation}
\label{recurseC}
\s_\la = \sum_{p=\la_1}^{2n+2k-1} \
\sum_{\mu \subset \la^*} a_{p,\mu} \,\s_p \,\s_\mu
\end{equation}
with $a_{p,\mu}\in\Z$, valid in the stable cohomology ring $\IH(\IG_k)$
\end{prop}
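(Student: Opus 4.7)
The plan is to establish the recursion by combining the Pieri rule (\ref{classpieri}) with a descending induction on the first part $\la_1$. I would first analyze the product $\s_{\la_1}\cdot\s_{\la^*}$, which by Pieri equals $\sum_\nu 2^{N(\la^*,\nu)}\s_\nu$, summed over $k$-strict $\nu$ with $\la^*\to\nu$ and $|\nu|=|\la|$. A direct combinatorial computation shows $N(\la^*,\la)=0$: the skew shape $\la\ssm\la^*$ consists of boxes $[i,c]$ with $\la_{i+1}<c\leq\la_i$ for $i<\ell(\la)$ together with $[\ell(\la),c]$ for $c\leq\la_{\ell(\la)}$, and its restriction to columns $k+1,\ldots,k+n$ forms a single vertex-connected component (consecutive rows meet at a shared corner of the form $(i+1,\la_{i+1}+1)$) containing the box at $[m,k+1]$ for $m:=\max\{i:\la_i>k\}$. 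Hence $\s_\la$ appears with coefficient $1$, and rearranging yields
\[
\s_\la=\s_{\la_1}\cdot\s_{\la^*}-\sum_{\nu\neq\la}2^{N(\la^*,\nu)}\s_\nu,
\]
whose first term is already in the desired form, with $p=\la_1$ and $\mu=\la^*$.

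The key combinatorial input for the induction is two structural properties of the $\nu$'s: (i) $\nu^*\subset\la^*$, and (ii) $\nu_1\geq\la_1$ with equality if and only if $\nu=\la$. Setting $\la^-:=\la^*\ssm V$, where $V$ is the removed vertical strip, the horizontal-strip condition $\nu_i\leq\la^-_{i-1}\leq\la^*_{i-1}=\la_i$ for $i\geq 2$ gives (i). For (ii), the horizontal strip $H=\nu\ssm\la^-$ has size $\la_1+|V|$, with at most $\la^-_{i-1}-\la^-_i$ boxes in each row $i\geq 2$; a telescoping sum shows that row 1 must receive at least $\la_1+|V|-\la^-_1$ boxes, so $\nu_1\geq\la^-_1+(\la_1+|V|-\la^-_1)=\la_1+|V|$. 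Equality forces $V=\emptyset$ and every $h_i$ to saturate its per-row bound, which pins down $\nu=\la$.

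With these in hand, I would proceed by induction on $\la_1$ in descending order (for partitions of fixed total size; the base case $\la_1=|\la|$ yields the single-row partition $\la=(|\la|)$, where the formula is trivial). In the inductive step, each $\nu\neq\la$ in the Pieri sum satisfies $\nu_1>\la_1$ and $\nu^*\subset\la^*$, so the inductive hypothesis supplies $\s_\nu=\sum b_{p',\mu'}\s_{p'}\s_{\mu'}$ with $p'\geq\nu_1>\la_1$ and $\mu'\subset\nu^*\subset\la^*$; substitution then produces the required formula. The main obstacle I anticipate is controlling the upper bound $p\leq 2n+2k-1$, since the $\nu$'s arising in the Pieri sum need not themselves lie in $\cP(k,n)$. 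I would resolve this by invoking Corollary \ref{onlycor}, which guarantees that $\s_\la$ admits some Giambelli expression supported on $\s_p$ with $p\leq 2n+2k-1$; combined with the form of our recursive expansion, the out-of-range contributions must cancel in the total sum, permitting truncation to $p\in[\la_1,2n+2k-1]$.
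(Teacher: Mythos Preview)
Your iterated Pieri approach is essentially the paper's ``first method,'' and your combinatorial claims (the coefficient of $\s_\la$ in $\s_{\la_1}\cdot\s_{\la^*}$ is $1$, and each other $\nu$ satisfies $\nu_1>\la_1$ with $\nu^*\subset\la^*$) are correct. The descending induction on $\la_1$ therefore produces a valid identity
\[
\s_\la=\sum_{p=\la_1}^{|\la|}\ \sum_{\mu\subset\la^*} a_{p,\mu}\,\s_p\,\s_\mu
\]
in $\IH(\IG_k)$. The real problem is the upper bound: for many $\la\in\cP(k,n)$ one has $|\la|>2n+2k-1$, and your handling of this is where the argument breaks down.

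Your proposed fix---invoke Corollary~\ref{onlycor} and conclude that the terms with $p>2n+2k-1$ ``must cancel''---does not work as stated. Corollary~\ref{onlycor} gives a \emph{different} expression for $\s_\la$, namely as a polynomial in the special classes, not as an element of $\mathrm{span}\{\s_p\,\s_\mu:\mu\subset\la^*\}$. Since the products $\s_p\,\s_\mu$ are not linearly independent in $\IH(\IG_k)$, knowing that two expressions represent the same class does not force their coefficients to agree, and in particular does not let you discard the high-$p$ terms. The paper's proof confronts exactly this obstacle with two additional ingredients you are missing. First, Lemma~\ref{prodlemma} shows that the products $\s_{m-r}\,\s_\mu$ with $|\mu|=r<m/2$ \emph{are} linearly independent; this is the regime in which coefficient comparison is legitimate. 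Second, since $\la$ itself need not lie in that regime, the paper passes to $\la'=(\la_1+m,\la^*)$ with $m>|\la|$, compares the iterated-Pieri and Giambelli expressions for $\s_{\la'}$ via Lemma~\ref{prodlemma} to deduce the bound $p\le 2n+2k-1+m$ for $\la'$, and finally uses Lemma~\ref{stablepieri} to transport the resulting recursion back to $\la$. Without an argument of this kind, your truncation step is a genuine gap.
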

\begin{proof}
The argument is done in two steps, the first one being a reduction
step.  We claim that it is enough to prove that there exists a
nonnegative integer $m$ such that $\s_{(\la_1+m,\la^*)}$ is a linear
combination of $\s_p \,\s_\mu$ for $\la_1+m\leq p\leq 2n+2k-1+m$ and
$\mu \subset \la^*$. Suppose that we know this, then let us try to
obtain an expression for $\s_\la$.  

If $\la_1\geq \ell(\la)+2k-1$, and if we have an expression
\begin{equation}
\label{reduce}
\s_{(\la_1+m,\la^*)}=
\sum_{p=\la_1+m}^{2n+2k-1+m} \
\sum_{\mu \subset \la^*} a_{p,\mu} \, \s_p \, \s_\mu
\end{equation}
then we must have 
\begin{equation}
\label{secsum}
\s_\la=\sum_{p=\la_1}^{2n+2k-1}\sum_{\mu} a_{p+m,\mu} \,\s_p \,\s_\mu. 
\end{equation}
Indeed, upon applying the Pieri rule (\ref{classpieri}), the
coefficient of $\s_\nu$ for $\nu$ with $\nu_1>\la_1$ in each term in
the sum (\ref{secsum}) is equal to the coefficient of
$\s_{(\nu_1+m,\nu_2,\ldots)}$ in the corresponding term in (\ref{reduce})
by Lemma \ref{stablepieri}, and by (\ref{reduce}) these sum to
zero. It remains to consider $\nu_1=\la_1$, i.e., $\nu=\la$, and the
coefficient in this case is 1 since we must have
$a_{\la_1+m,\la^*}=1$.

If $\la_1<\ell(\la)+2k-1$, then set $\la'=(n+k,\la^*)$. By the above
case, we have a recursion
\[
\s_{\la'} = \sum_{p=n+k}^{2n+2k-1}\ 
\sum_{\mu\subset\la^*} a_{p,\mu} \,\s_p \,\s_\mu
\]
for some $a_{p,\mu}\in\Z$. Using Lemma \ref{stablepieri}, now, we
deduce that
\[
\s_\la= \sum_{p=\la_1}^{n+k+\la_1-1} \
\sum_{\mu\subset\la^*}  a_{p+n+k-\la_1,\mu} \,\s_p \,\s_\mu
+\sum_\nu b_{\la\nu}\, \s_\nu
\]
where $b_{\la\nu}\in \Z$ and the partitions $\nu$ in the second sum
satisfy $\la_1 < \nu_1 \leq \ell(\la)+2k-1$ and
$\nu^*\subset\la^*$. By decreasing induction on $\nu_1$, we may assume
that expressions for these $\s_\nu$ as linear combinations of $\s_p \,
\s_\mu$ with $\nu_1\leq p\leq2n+2k-1$ and $\mu \subset \nu^*$
exist. This completes the proof of the claim.

In the second step, given $\la \in \cP(k,n)$ and $m>|\la|$, we show that
$\s_{(\la_1+m,\la^*)}$ is a linear combination of products $\s_p\,
\s_\mu$ for $\la_1+m\leq p\leq 2n+2k-1+m$ and $\mu \subset \la^*$.
This uses the following result.

\begin{lemma}
\label{prodlemma}
Let $P_r$ be the set of partitions $\mu$ with $|\mu| = r$, and
let $m$ be a positive integer. Then the $\Z$-linear map
\[
\phi\ :\ \bigoplus_{r=0}^{\lfloor\frac{m-1}{2}\rfloor}
\bigoplus_{\mu\in P_r}\Z \to \IH(\IG_k)
\]
which, for given $r$ and $\mu\in P_r$, sends the corresponding
basis element to $\s_{m-r} \s_\mu$, is injective.
\end{lemma}
\begin{proof}
The image of $\phi$ is contained in the span of the $\s_{(m-r,\mu)}$
for $0\leq r <\frac{m}{2}$ and $\mu$ in $P_r$. Observe that the linear
map $\phi$ is represented by a block triangular matrix with diagonal
matrices as the blocks along the diagonal.  The lemma follows.
\end{proof}

There are two elementary ways to obtain a recursion formula for a
given Schubert class. First, for any $k$-strict partition $\la$, the 
Pieri rule (\ref{classpieri}) gives
\begin{equation}
\label{eq1}
\s_\la = \s_{\la_1} \s_{\la^*} - \sum_{\substack{\mu_1>\la_1 \\
\mu^*\subset\la^*}} d_{\la\mu}\,\s_\mu,
\end{equation}
where the $d_{\la\mu}\in\Z$ and the sum is over partitions $\mu$ with
$\mu_1>\la_1$ and $\mu^*\subset\la^*$. We then apply the same
prescription to each of the summands $\s_\mu$ in (\ref{eq1}), and
iterate this procedure. Finally, we obtain an expression
\[
\s_\la = \sum_{p=\la_1}^{|\la|}\ \sum_{\mu\subset\la^*}
a_{p,\mu} \,\s_p\,\s_\mu.
\]
Second, consider the stable Giambelli formula
\begin{equation}
\label{helpful}
\s_{\la} = R^\la\, m_\la = \sum_\nu b_\nu\, m_\nu
\end{equation}
in the ring $\IH(\IG_k)$. By Proposition \ref{giamgen} we know that 
the integer vectors $\nu$ in (\ref{helpful}) all
satisfy $\nu_1\leq \la_1+a_1+\la_2+a_2$. Hence we have an equation
\[
\s_\la = \sum_{p=\la_1}^{\la_1+a_1+\la_2+a_2} 
\s_p \ \sum_{\nu \, :\, \nu_1=p} b_\nu \, m_{\nu^*}.
\]

For $\la\in \cP(k,n)$, choose $m > |\la|$, and set
$\la'=(\la_1+m,\la^*)$.  Consider the expressions obtained by the two
methods described in the last paragraph applied to $\la'$:
\[
\s_{\la'}= \sum_{p=\la_1+m}^{|\la|+m} \sum_{\mu \subset \la^*} \
a_{p,\mu}\, \s_p \,\s_\mu
\]
and 
\[
\s_{\la'}= \sum_{p=\la_1+m}^{2n+2k-1+m}\ 
\sum_{\mu \in P_{|\la|+m-p}} b_{p,\mu} \, \s_p \, \s_\mu.
\]
By Lemma \ref{prodlemma}, we have $a_{p,\mu}=b_{p,\mu}$.  Hence, in
particular, $a_{p,\mu}=0$ whenever $p>2n+2k-1+m$. Therefore we have a
recursion formula (\ref{recurseC}) for $\s_{\la'}$, as desired.
\end{proof}

\begin{remark}
One can be more precise about the recursion formula (\ref{recurseC}) in
the case when the $k$-strict partition $\la\in \cP(k,n)$ satisfies 
$\la_1\geq \ell(\la)+2k-1$. If the Pieri rule reads
\[
\s_{\la_1}\cdot \s_{\la^*}  = 
\sum_{p=\la_1}^{2n+2k-1} \ \sum_{\mu\subset\la^*} 2^{n(p,\mu)}\, \s_{p,\mu}
\]
then we have
\[
\s_{\la} = \sum_{p=\la_1}^{2n+2k-1} \ \sum_{\mu\subset\la^*} 
(-1)^{p-\la_1} \, 2^{n(p,\mu)}\, \s_p\, \s_\mu.
\]
This result is proved in \cite{T}.
\end{remark}

\section{Quantum Giambelli for $\IG(n-k,2n)$}
\label{qgig}

The quantum cohomology ring $\QH^*(\IG)$ is a $\Z[q]$-algebra
which is isomorphic to $\HH^*(\IG,\Z)\otimes_{\Z}\Z[q]$ as a module over
$\Z[q]$. The degree of the formal variable $q$ here is $n+k+1$.
We begin by recalling the quantum Pieri rule of \cite{BKT1}. This 
states that for any $k$-strict partition $\lambda\in\cP(k,n)$
and integer $p \in [1,n+k]$, we have
\begin{equation}
\label{igqpieri}
  \s_p \cdot \s_\lambda = \sum_{\lambda \to \mu} 2^{N(\lambda,\mu)}\,\s_\mu +
  \sum_{\lambda \to \nu} 2^{N(\lambda,\nu)-1} \,\s_{\nu^*}\, q
\end{equation}
in the quantum cohomology ring of $\IG(n-k,2n)$.  The first sum in
(\ref{igqpieri}) is over partitions $\mu\in\cP(k,n)$ such that
$|\mu|=|\lambda|+p$, and the second sum is over partitions
$\nu\in\cP(k,n+1)$ with $|\nu|=|\lambda|+p$ and $\nu_1 = n+k+1$.

We work now with rational coefficients and introduce an
important tool: a ring homomorphism 
\[
\pi:\IH(\IG_k)\to\QH(\IG(n-k,2n)).
\]
The map $\pi$ is determined by setting
\[
\pi(\s_i) = 
\begin{cases}
\s_i & \text{if $1\leq i \leq n+k$}, \\
q/2 & \text{if $i=n+k+1$}, \\
0 & \text{if $n+k+1< i \leq 2n+2k$}, \\
0 & \text{if $i$ is odd and $i>2n+2k$}.
\end{cases}
\]
The relations (\ref{stabrels}) then uniquely specify the values
$\pi(\s_i)$ for $i$ even and $i>2n+2k$.

\begin{thm}[Quantum Giambelli for $\IG$]
\label{qgiamIG}
For every $\la\in\cP(k,n)$, the quantum Giambelli formula for $\s_\la$
in $\QH(\IG(n-k,2n))$ is obtained from the classical Giambelli 
formula $\s_\la = R^\la\,m_\la$ in $\HH^*(\IG(n+1-k,2n+2),\Z)$
by replacing the special Schubert class $\s_{n+k+1}$ with 
$q/2$.
\end{thm}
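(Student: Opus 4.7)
The plan is to induct on $|\la|$ via the recursion of Proposition \ref{recprop}, matching the classical Pieri rule in $\HH^*(\IG(n+1-k, 2n+2))$ with the quantum Pieri rule \eqref{igqpieri} in $\QH(\IG(n-k, 2n))$.

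First I would verify that $\pi$ is a well-defined ring homomorphism, which amounts to checking that the relations \eqref{stabrels} restrict to known quantum relations in $\QH(\IG(n-k,2n))$ (notably the first quantum relation $\s_{n+k}^2 = q\,\s_{n+k-1}$, which should emerge from \eqref{stabrels} at $r = n+k$ after substitution). Then, assuming the theorem for all partitions of size $< |\la|$, I would apply $\pi$ to the recursion
\[
\s_\la \ = \ \sum_{p=\la_1}^{2n+2k-1}\ \sum_{\mu \subset \la^*}\,a_{p,\mu}\,\s_p\,\s_\mu
\]
of Proposition \ref{recprop}; since every $\mu$ has $|\mu| \leq |\la| - \la_1 < |\la|$, the induction hypothesis replaces each $\pi(\s_\mu)$ by the quantum Schubert class $\s_\mu$. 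The explicit values of $\pi$ on special classes then yield
\[
\pi(\s_\la) \ = \ \sum_{p=\la_1}^{n+k}\sum_\mu a_{p,\mu}\,\s_p \star \s_\mu \ + \ \tfrac{q}{2}\sum_\mu a_{n+k+1,\mu}\,\s_\mu
\]
in $\QH$ (writing $\star$ for the quantum product); it remains to identify this with $\s_\la$.

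The $q^0$ coefficient matches $\s_\la$ immediately: expand $\s_p \star \s_\mu$ via the classical part of \eqref{igqpieri} and compare with the same recursion pushed forward to $\HH^*(\IG(n-k,2n))$, where $\s_p = 0$ for $p > n+k$. The hard step is showing the $q^1$ coefficient vanishes. For that, I would push the recursion further, into $\HH^*(\IG(n+1-k,2n+2))$ — where now $\s_{n+k+1}$ survives but $\s_p = 0$ for $p > n+k+1$ — expand using \eqref{classpieri}, and equate the coefficients of Schubert classes $\s_{\eta'}$ with $\eta'_1 = n+k+1$; these are zero on the left since $\la \in \cP(k,n)$. Under the identification $\s_{\eta'} \leftrightarrow \tfrac{q}{2}\,\s_{(\eta')^*}$ built into \eqref{igqpieri}, the resulting combinatorial identity should produce the required cancellation, after bookkeeping the Pieri factors $2^{N(\mu,\eta')}$ from the larger Grassmannian against the $2^{N(\mu,\eta')-1}$ appearing in the quantum rule.

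The most delicate point is this $q^1$ cancellation: ensuring that every $\mu$ with $\mu \to (n+k+1,\rho)$ via $p = n+k+1$ in the classical Pieri for $\IG(n+1-k,2n+2)$ is correctly matched against the single term $a_{n+k+1,\rho}$ arising from $\pi(\s_{n+k+1}) = q/2$. This is the quantum-equals-classical principle at the heart of the argument, and is where the factor of $\tfrac{1}{2}$ in the definition of $\pi$ on $\s_{n+k+1}$ becomes essential.
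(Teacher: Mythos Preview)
Your proposal is correct and follows essentially the same route as the paper: establish $\pi(\s_\la)=\s_\la$ by induction using the recursion of Proposition~\ref{recprop}, then read off quantum Giambelli by applying $\pi$ to the stable Giambelli polynomial (via Corollary~\ref{onlycor}). The only cosmetic differences are that the paper inducts on $\ell(\la)$ rather than $|\la|$ (both work, since $\mu\subset\la^*$ forces both invariants to drop), and the paper is terser about the $q^1$ cancellation---it simply says the computation is carried out ``using (\ref{eqrec})'' without separating degrees, whereas you correctly isolate the delicate point: one must check that in $\HH^*(\IG(n+1-k,2n+2))$ the product $\s_{n+k+1}\cdot\s_\mu$ contributes exactly $\s_{(n+k+1,\mu)}$ with coefficient $1$, which follows because the horizontal strip $(n+k+1,\mu)/\mu$ has one box in every column and the resulting set $\A$ is connected and meets column $k+1$.
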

\begin{proof}
We claim that the ring homomorphism $\pi$ satisfies
$\pi(\s_\la) = \s_\la$ for all $\la\in\cP(k,n)$. The proof of the 
claim is by induction on the length of $\la$, with the case of 
length one being clear.
For the inductive step, Proposition \ref{recprop} implies that  
\begin{equation}
\label{eqrec}
\s_\la = \sum_{p=\la_1}^{n+k+1} \
\sum_{\mu \subset \la^*} a_{p,\mu} \,\s_p \,\s_\mu
\end{equation}
holds in the cohomology ring of $\IG(n+1-k,2n+2)$. Furthermore, if we
we apply the ring homomorphism $\pi$ to both sides of (\ref{recurseC})
and use the induction hypothesis, we find that
\begin{equation}
\label{eqrec2}
\pi(\s_\la) = 
\sum_{p=\la_1}^{n+k} \ \sum_{\mu \subset \la^*} 
a_{p,\mu}\, \s_p \, \s_\mu
+ \frac{q}{2} \sum_{\mu \subset \la^*} a_{n+k+1,\mu} \, \s_\mu
\end{equation}
holds in $\QH^*(\IG(n-k,2n))$. The right hand side of (\ref{eqrec2})
can be evaluated using the quantum Pieri formula (\ref{igqpieri}). We
perform this computation using (\ref{eqrec}) and deduce that the
expression evaluates to $\s_\la$, proving the claim.

According to Corollary \ref{onlycor}, the stable Giambelli polynomial 
for $\s_\la$ may be expressed as an equation
\begin{equation}
\label{stabgiam}
\s_\la = f_\la(\s_1,\ldots,\s_{2n+2k-1})
\end{equation}
in $\IH(\IG_k)$, where $f_\la\in \Z[x_1,\ldots,x_{2n+2k-1}]$. 
We now apply the ring homomorphism $\pi$ to (\ref{stabgiam})
to get an identity in
$\QH(\IG(n-k,2n))$. 
The left hand side evaluates to $\s_\la$ by the last claim, while the 
right hand side maps to $f_\la(\s_1,\ldots,\s_{n+k},\frac{q}{2},0,\ldots,0)$.
We deduce that
\[
\s_\la = f_\la(\s_1,\ldots,\s_{n+k},\frac{q}{2},0,\ldots,0)
\]
in $\QH(\IG(n-k,2n))$, which is precisely the quantum Giambelli formula.
\end{proof}

\section{Quantum Giambelli for $\OG(n-k,2n+1)$}
\label{qgog}

\subsection{}
\label{firstOG}
For each $k\geq 0$, let $\OG=\OG(n-k,2n+1)$ denote the odd orthogonal
Grassmannian which parametrizes the $(n-k)$-dimensional isotropic
subspaces in $\C^{2n+1}$, equipped with a non-degenerate symmetric
bilinear form. The Schubert varieties in $\OG$ are indexed by the same
set of $k$-strict partitions $\cP(k,n)$ as for $\IG(n-k,2n)$.  Given
any $\la\in\cP(k,n)$ and a complete flag of subspaces
\[
F_\bull: \, 0=F_0 \subsetneq F_1 \subsetneq \cdots
\subsetneq F_{2n+1}=\C^{2n+1}
\]
such that $F_{n+i}= F_{n+1-i}^\perp$ for $1 \leq i \leq n+1$, we
define the codimension $|\la|$ Schubert variety
\[ 
   X_\lambda(F_\bull) = \{ \Sigma \in \OG \mid \dim(\Sigma \cap
   F_{\ov{p}_j(\lambda)}) \gequ j \ \ \forall\, 1 \lequ j \lequ 
   \ell(\lambda) \} \,,
\]
where 
\[
\ov{p}_j(\lambda) = n+k+1+j-\lambda_j - \#\{i\leq j : \lambda_i+\lambda_j
> 2k+j-i \}.
\]
Let $\ta_{\la} \in \HH^{2|\lambda|}(\OG,\Z)$ denote the cohomology
class dual to the cycle given by $X_\lambda(F_\bull)$.

Let $\ell_k(\la)$ be the number of parts
$\la_i$ which are strictly greater than $k$, and let $\cQ_{\IG}$ and
$\cQ_{\OG}$ denote the universal quotient vector bundles over
$\IG(n-k,2n)$ and $\OG(n-k,2n+1)$, respectively. It is known (see
e.g.\ \cite[\S 3.1]{BS}) that the map which sends $\s_p =
c_p(\cQ_{\IG})$ to $c_p(\cQ_{\OG})$ for all $p$ extends to a ring
isomorphism $\varphi:\HH^*(\IG,\Q) \to \HH^*(\OG,\Q)$ such that 
$\varphi(\s_\la) = 2^{\ell_k(\la)} \ta_{\la}$ for all $\la\in \cP(k,n)$.

We let $c_p= c_p(\cQ_{\OG})$. The {\em special Schubert classes}
on $\OG$ are related to the Chern classes 
$c_p$ by the equations
\[
c_p=
\begin{cases}
\ta_p & \text{if $p\lequ k$},\\
2\ta_p & \text{if $p> k$}.
\end{cases}
\]
For any integer sequence
$\alpha$, set $m_{\alpha} = \prod_ic_{\al_i}$. Then for
every $\la\in\cP(k,n)$, the classical Giambelli formula
\begin{equation}
\label{mainthmB}
\ta_\la = 2^{-\ell_k(\la)}R^{\la}\,m_{\la}
\end{equation}
holds in $\HH^*(\OG,\Z)$. 

\subsection{}

The quantum cohomology ring $\QH^*(\OG(n-k,2n+1))$ is defined
similarly to that of $\IG$, but the degree of $q$ here is $n+k$.
More notation is required to state the quantum Pieri rule for
$\OG$. For each $\la$ and $\mu$ with $\la\to\mu$, we define
$N'(\la,\mu)$ to be equal to the number (respectively, one less than
the number) of connected components of $\A$, if $p\leq k$
(respectively, if $p>k$). Let $\cP'(k,n+1)$ be the set of
$\nu\in\cP(k,n+1)$ for which $\ell(\nu)=n+1-k$, $2k \lequ \nu_1 \lequ
n+k$, and the number of boxes in the second column of $\nu$ is at most
$\nu_1-2k+1$.  For any $\nu\in\cP'(k,n+1)$, we let $\wt\nu \in
\cP(k,n)$ be the partition obtained by removing the first row of $\nu$
as well as $n+k-\nu_1$ boxes from the first column.  That is,
\[
\wt{\nu}=(\nu_2,\nu_3,\ldots,\nu_r), \
\text{where $r=\nu_1-2k+1$.}
\]

According to \cite[Theorem 2.4]{BKT1}, for 
any $k$-strict partition $\lambda\in\cP(k,n)$ and integer $p\in [1,n+k]$, 
the following quantum Pieri rule holds in $\QH^*(\OG(n-k,2n+1))$.
\begin{equation}
\label{ogqp}
\ta_p \cdot \ta_\la =
\sum_{\la\to\mu} 2^{N'(\lambda,\mu)}\,\tau_\mu +\sum_{\la\to\nu}
2^{N'(\lambda,\nu)} \,\tau_{\wt{\nu}}\, q \, +\,
\sum_{\la^*\to\rho}
2^{N'(\lambda^*,\rho)} \,\ta_{\rho^*}\,
q^2\, .
\end{equation}
Here the first sum is classical, the second sum is over
$\nu\in \cP'(k,n+1)$ with $\lambda\to\nu$ and $|\nu|=|\la|+p$, and
the third sum is empty unless $\la_1=n+k$, and over
$\rho\in\cP(k,n)$ such that $\rho_1=n+k$, $\lambda^*\to\rho$, and
$|\rho|=|\la|-n-k+p$.

Let $\delta_p=1$, if $p\leq k$, and $\delta_p=2$, otherwise.
The stable cohomology ring $\IH(\OG_k)$ has a free $\Z$-basis of 
Schubert classes $\ta_\la$ for $k$-strict partitions $\la$, 
and is presented as a quotient of the polynomial ring 
$\Z[\ta_1,\ta_2,\ldots]$ modulo the relations
\begin{equation}
\label{stabrelsOG}
\ta_r^2+ 2\sum_{i=1}^r(-1)^i\delta_{r-i}\ta_{r+i}\ta_{r-i} = 0 \ \ \ 
\text{for $r>k$}.
\end{equation}

We require a ring homomorphism 
\[
\tilde\pi:\IH(\OG_k)\to\QH(\OG(n-k,2n+1))
\]
analogous to the map $\pi$ of \S \ref{qgig}. 
The morphism $\tilde\pi$ is determined by setting
\[
\tilde\pi(\ta_i) = 
\begin{cases}
\ta_i & \text{if $1\leq i \leq n+k$}, \\
0 & \text{if $n+k< i < 2n+2k$}, \\
0 & \text{if $i$ is odd and $i>2n+2k$}.
\end{cases}
\]
The relations (\ref{stabrelsOG}) then uniquely specify the values
$\tilde\pi(\ta_i)$ for $i$ even and $i\geq 2n+2k$.  To verify this, we
just have to check that the relations
\[
\ta_r^2+ 2\sum_{i=1}^{n+k-r}(-1)^i\delta_{r-i}\ta_{r+i}\ta_{r-i} = 0
\]
are true in $\QH^*(\OG(n-k,2n+1))$, for $(n+k)/2\leq r \leq n+k-1$.
But when $k<n-1$ the individual terms in these relations carry no $q$
correction.  Indeed, we are applying the quantum Pieri rule
(\ref{ogqp}) to length $1$ partitions, hence the $q$ term vanishes
(since $1 < n-k$) and the $q^2$ term vanishes (since
$\deg(q^2)=2n+2k$). It remains only to consider the case $k=n-1$,
which uses the quantum Pieri rule for the quadric $\OG(1,2n+1)$. The
computation is then done as in \cite[Theorem 2.5]{BKT1} (which treats
the case $r=n$), and involves computing the coefficient $c$ of
$q\,\ta_{2(r-n)+1}$ in the corresponding expression.  As in loc.\
cit., the result is $c=1-2+2-\cdots \pm2\mp 1$ when $r\leq (3n-2)/2$,
and otherwise $c=2-4+4-\cdots \pm 4\mp 2$; hence $c=0$ in both cases.

\begin{thm}[Quantum Giambelli for $\OG$]
For every $\la\in\cP(k,n)$, we have 
\[
\ta_\la = 2^{-\ell_k(\la)}R^{\la}\,m_{\la}
\]
in the quantum cohomology ring $\QH(\OG(n-k,2n+1))$. In other words,
the quantum Giambelli formula for $\OG$ is the same as the classical
Giambelli formula.
\end{thm}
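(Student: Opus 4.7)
The plan is to follow the same strategy as in the proof of Theorem \ref{qgiamIG}, with the ring homomorphism $\tilde\pi$ playing the role of $\pi$. The preliminary machinery of Section \ref{prelims}---Propositions \ref{ac}, \ref{giamgen}, Corollary \ref{onlycor}, Lemma \ref{stablepieri}, and Proposition \ref{recprop}---depends only on the shape of the raising operator Giambelli formula (which is common to $\IG$ and $\OG$ up to the harmless prefactor $2^{-\ell_k(\la)}$) and on the combinatorial skeleton of the Pieri rule (i.e., the same $\la \to \mu$ structure in both cases, with only the multiplicities changed). I would begin by verifying that these results transfer to the orthogonal setting with only cosmetic changes. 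In particular, for any $\la \in \cP(k,n)$ there is a recursion
\[
\ta_\la = \sum_{p=\la_1}^{2n+2k-1}\ \sum_{\mu \subset \la^*} a_{p,\mu}\,\ta_p\,\ta_\mu
\]
valid in $\IH(\OG_k)$, and the stable Giambelli polynomial $f_\la := 2^{-\ell_k(\la)} R^\la m_\la$ involves only $\ta_p$ with $p \leq 2n+2k-1$.

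The central claim is that $\tilde\pi(\ta_\la) = \ta_\la$ in $\QH(\OG(n-k,2n+1))$ for every $\la \in \cP(k,n)$, which I would prove by induction on $\ell(\la)$. The base case $\ell(\la)=1$ is immediate from the definition of $\tilde\pi$. For the inductive step, apply $\tilde\pi$ to the recursion: the summands with $p > n+k$ vanish because $\tilde\pi(\ta_p) = 0$ in that range, while by the inductive hypothesis applied to the shorter $\mu \subset \la^*$,
\[
\tilde\pi(\ta_\la) = \sum_{p=\la_1}^{n+k}\ \sum_{\mu \subset \la^*} a_{p,\mu}\,\ta_p\,\ta_\mu
\]
in $\QH(\OG(n-k,2n+1))$. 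By stability, the same coefficients $a_{p,\mu}$ appear in a recursion for $\ta_\la$ valid in $\HH(\OG(n+1-k,2n+3))$. The required equality $\tilde\pi(\ta_\la) = \ta_\la$ then comes down to matching the two expansions after using the quantum Pieri rule (\ref{ogqp}) on the right above and the classical Pieri rule in the larger ring on the right of the stability recursion.

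The main obstacle is precisely this matching: one must check that for each product $\ta_p \ta_\mu$ with $p \leq n+k$, the $q$- and $q^2$-corrections appearing in (\ref{ogqp}) account exactly for those $\ta_\nu$ that arise classically in $\HH(\OG(n+1-k,2n+3))$ but are not supported on $\cP(k,n)$, together with the classical terms produced by the summands with $p > n+k$ in the larger-space recursion (which $\tilde\pi$ kills). Concretely, the partitions $\nu$ with $\nu_1 > n+k$ appearing in the larger-space classical Pieri should correspond, via the reduction $\nu \mapsto \wt\nu$ of \S\ref{firstOG}, to the $q$-correction in (\ref{ogqp}), and an analogous correspondence should handle the $q^2$-correction (only relevant when $\la_1 = n+k$). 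This reduces to a direct combinatorial check tracking the box-removal operations and the multiplicities $N'(\la,\mu)$, together with the degree bookkeeping $\deg q = n+k$.

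Once $\tilde\pi(\ta_\la) = \ta_\la$ is established, the theorem follows by applying $\tilde\pi$ to the stable Giambelli identity $\ta_\la = f_\la(\ta_1,\ldots,\ta_{2n+2k-1})$ in $\IH(\OG_k)$. The left hand side becomes $\ta_\la$ in $\QH$, while the right hand side becomes $f_\la(\ta_1,\ldots,\ta_{n+k},0,\ldots,0)$. Since for $\la\in\cP(k,n)$ this last expression is exactly the polynomial representing $2^{-\ell_k(\la)} R^\la m_\la$ evaluated with the higher Chern-type generators set to zero (in agreement with how the classical Giambelli formula (\ref{mainthmB}) is evaluated in $\HH(\OG(n-k,2n+1))$), we obtain
\[
\ta_\la = 2^{-\ell_k(\la)} R^\la m_\la
\]
in $\QH(\OG(n-k,2n+1))$, completing the proof.
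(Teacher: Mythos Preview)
Your overall strategy is exactly the paper's: transfer the preliminaries of \S\ref{prelims} to the orthogonal setting (the paper does this in one line via the isomorphism $\varphi$ of \S\ref{firstOG}, you do it by hand), then rerun the proof of Theorem~\ref{qgiamIG} with $\tilde\pi$ in place of $\pi$.

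The one place where your write-up drifts from what actually happens is the ``main obstacle'' paragraph. You mimic the $\IG$ argument by passing to the larger space $\HH(\OG(n+1-k,2n+3))$ and propose that the $q$- and $q^2$-corrections in (\ref{ogqp}) should match, via $\nu\mapsto\wt\nu$, the classical terms with $\nu_1>n+k$ together with the $p=n+k+1$ summands killed by $\tilde\pi$. But this correspondence is not the right one here, and in fact the matching step is \emph{easier} for $\OG$ than for $\IG$: since every $\mu\subset\la^*$ satisfies $\ell(\mu)\le\ell(\la)-1\le n-k-1$ and $\mu_1\le\la_2<n+k$, the quantum Pieri products $\ta_p\cdot\ta_\mu$ in (\ref{ogqp}) carry no quantum correction at all (the $q$-sum requires $\ell(\nu)=n+1-k$, impossible from such $\mu$; the $q^2$-sum requires $\mu_1=n+k$). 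Hence $\tilde\pi(\ta_\la)=\sum_{p\le n+k}\sum_{\mu\subset\la^*}a_{p,\mu}\,\ta_p\,\ta_\mu$ is already a purely classical expression, and it equals $\ta_\la$ simply because the stable recursion (\ref{recurseC}) specializes to $\HH^*(\OG(n-k,2n+1),\Z)$ where $\ta_p=0$ for $p>n+k$. No larger Grassmannian is needed. With this correction your argument goes through and coincides with the paper's.
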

\begin{proof}
We may use the isomorphism $\varphi$ of \S \ref{firstOG} to 
translate all of the results of \S \ref{prelims} to their images 
in $\HH^*(\OG,\Z)$ and the stable cohomology ring $\IH(\OG_k)$.
The proof of quantum Giambelli for $\OG$ is therefore identical 
to the proof of Theorem \ref{qgiamIG}, using the ring homomorphism 
$\tilde\pi$ in place of $\pi$.
\end{proof}


\begin{thebibliography}{BKT1}

\bibitem[BS]{BS} N. Bergeron and F. Sottile :
{\em A Pieri-type formula for isotropic flag manifolds}, Trans.
Amer. Math. Soc. {\bf 354} (2002), 4815--4829.

\bibitem[Be]{Be} A. Bertram :
{\em Quantum Schubert calculus},
Adv. Math. {\bf 128} (1997), 289--305.

\bibitem[BKT1]{BKT1} A. S. Buch, A. Kresch and H. Tamvakis :
{\em Quantum Pieri rules for isotropic Grassmannians},
Preprint (2008), available at arXiv:0809.4966.

\bibitem[BKT2]{BKT2} A. S. Buch, A. Kresch and H. Tamvakis :
{\em A Giambelli formula for isotropic Grassmannians},
Preprint (2008), available at arXiv:0811.2781.

\bibitem[KT1]{KT1} A. Kresch and H. Tamvakis :
{\em Quantum cohomology of the Lagrangian Grassmannian},
J. Algebraic Geom. {\bf 12} (2003), 777--810.

\bibitem[KT2]{KT2} A. Kresch and H. Tamvakis :
{\em Quantum cohomology of orthogonal Grassmannians},
Compos. Math. {\bf 140} (2004), 482--500.

\bibitem[S]{S2} I. Schur :
{\em \"{U}ber die Darstellung der symmetrischen und der alternierenden
Gruppe durch gebrochene lineare Substitutionen}, J. reine angew.
Math. {\bf 139} (1911), 155--250.

\bibitem[T]{T} H. Tamvakis :
{\em Giambelli, Pieri, and tableau formulas via raising operators}, 
Preprint (2008), available at arXiv:0812.0639.

\bibitem[Y]{Y} A. Young : 
{\em On quantitative substitutional analysis VI}, Proc. Lond. 
Math. Soc. (2) {\bf 34} (1932), 196--230.

\end{thebibliography}
\end{document}